\documentclass[11pt]{amsart}

\usepackage[T1]{fontenc}
\usepackage{lmodern}
\usepackage{amsmath}
\usepackage{amssymb}
\usepackage{mathtools}
\usepackage[hidelinks]{hyperref}

\newtheorem{theorem}{Theorem}[section]
\newtheorem{lemma}[theorem]{Lemma}
\newtheorem{proposition}[theorem]{Proposition}

\numberwithin{equation}{section}

\title[Average Local Independence and Leaf Number]
{Average Local Independence and the Spanning-Tree Leaf Number:\\
A Proof of Graffiti.pc Conjecture 2}

\author{Yanmohan Wang}
\author{Tianyue Dai}
\author{Rui Tong}

\date{}

\subjclass[2020]{Primary 05C05; Secondary 05C35, 05C69}
\keywords{maximum-leaf spanning tree, local independence number,
neighborhood independence, triangle-free graph, extremal graph theory}

\begin{document}

\begin{abstract}
We prove Graffiti.pc Conjecture~2, a 1996 conjecture listed as open
on the \emph{Written on the Wall II} page marked ``Last update
7/23/26.'' Let \(G\) be a finite simple connected graph. For
\(v\in V(G)\), let
\[
 I(v)=\alpha\bigl(G[N_G(v)]\bigr),
\]
and let \(I_{\mathrm{avg}}(G)\) be the average of these local
independence numbers. The conjecture states that the maximum number
\(L_s(G)\) of leaves in a spanning tree of \(G\) satisfies
\[
 L_s(G)\ge 2\bigl(I_{\mathrm{avg}}(G)-1\bigr).
\]
We establish this inequality by extracting a triangle-free spanning
subgraph that retains at
least half of the total local-independence mass. A degree-square
argument then produces a double star with sufficiently many leaves,
and this tree extends to a spanning tree without losing leaves.
Balanced complete bipartite graphs show that the bound is sharp.
\end{abstract}

\maketitle

\section{Introduction and main result}

All graphs in this note are finite and simple. For a graph \(G\) and
\(v\in V(G)\), write
\[
 N_G(v)=\{u\in V(G):uv\in E(G)\}
\]
for the open neighborhood of \(v\). Define the \emph{local
independence number} at \(v\) by
\[
 I(v)=\alpha\bigl(G[N_G(v)]\bigr),
\]
where \(\alpha(H)\) is the independence number of \(H\), with
\(\alpha(\varnothing)=0\). The average local independence number is
\[
 I_{\mathrm{avg}}(G)
 =\frac{1}{|V(G)|}\sum_{v\in V(G)} I(v).
\]

For a tree \(T\), let
\[
 \ell(T)=\bigl|\{v\in V(T):d_T(v)=1\}\bigr|.
\]
Thus a leaf means a vertex of degree exactly one; in particular, the
one-vertex tree has no leaves under this convention. If \(G\) is
connected, define its \emph{spanning-tree leaf number} by
\[
 L_s(G)=\max\{\ell(T):T\text{ is a spanning tree of }G\}.
\]

The inequality below is Graffiti.pc Conjecture~2, listed as item
O~2 on the open-problem page of \emph{Written on the Wall
II}~\cite{WOWII}. The page is marked ``Last update 7/23/26,''
assigns the item the year 1996, and lists it among the open
conjectures. In 2008, DeLaVi\~na and Waller recorded the same
inequality as Conjecture~4 and described it as ``another long
standing open conjecture of Graffiti''~\cite[Conjecture~4]{DW}.
Theorem~\ref{thm:main} establishes the stated inequality.

\begin{theorem}\label{thm:main}
Let \(G\) be a finite simple connected graph. Then
\[
 L_s(G)\ge
 2\left(
 \frac{1}{|V(G)|}\sum_{v\in V(G)}I(v)-1
 \right)
 =2\bigl(I_{\mathrm{avg}}(G)-1\bigr).
\]
\end{theorem}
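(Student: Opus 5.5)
The plan is to find a \emph{double star} in \(G\) with at least \(2(I_{\mathrm{avg}}(G)-1)\) leaves, and then grow it into a spanning tree. The extension step is routine. Start from any tree \(T\subseteq G\) with \(|V(T)|\ge 2\). While \(T\) is not spanning, connectedness of \(G\) gives an edge \(xy\) with \(x\in V(T)\) and \(y\notin V(T)\); add it. If \(x\) was a leaf, it stops being one but \(y\) becomes one; otherwise \(y\) is a new leaf. So \(\ell(T)\) never decreases, and \(L_s(G)\) is at least the leaf count of the best double star. The small cases, where \(G\) is a single vertex or \(I_{\mathrm{avg}}(G)\le 1\), are trivial because the right-hand side is then at most \(0\).

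To find the double star, I will pass to a triangle-free spanning subgraph \(H\subseteq G\). In \(H\), the neighbourhoods of the two ends of any edge \(uv\) are disjoint apart from \(u\) and \(v\) themselves. Hence the double star centred at \(uv\) in \(H\) has exactly \(d_H(u)+d_H(v)-2\) leaves when both degrees are at least \(2\), and the degenerate cases only help. Writing \(n=|V(G)|\) and \(m=|E(H)|\), the degree-square step is
\[
 \max_{uv\in E(H)}\bigl(d_H(u)+d_H(v)\bigr)\ \ge\ \frac{1}{m}\sum_{uv\in E(H)}\bigl(d_H(u)+d_H(v)\bigr)=\frac{1}{m}\sum_{v}d_H(v)^2\ \ge\ \frac{1}{m}\cdot\frac{(2m)^2}{n}=\frac{2\cdot 2m}{n}.
\]
So it suffices to build a triangle-free \(H\) with \(2|E(H)|\ge \sum_v I(v)\), i.e.\ \(|E(H)|\ge \tfrac12\sum_v I(v)\). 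This is the ``half of the local-independence mass'' in the abstract. It gives an edge with \(d_H(u)+d_H(v)\ge 2I_{\mathrm{avg}}(G)\), hence a double star with at least \(2(I_{\mathrm{avg}}(G)-1)$ leaves. For \(K_{n,n}\) every inequality is tight, which matches the claimed sharpness.

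The main obstacle is constructing \(H\). My first attempt is as follows. For each \(v\), fix a maximum independent set \(S_v\subseteq N_G(v)\), and form the arc multiset \(A=\{(v,u):u\in S_v\}\), which has \(|A|=\sum_v I(v)\). The target is a triangle-free (ideally bipartite) \(H\) that contains, for a suitable orientation, at least half of these arcs as distinct edges.
\begin{itemize}
\item The natural tool is a locally optimal bipartition \((X,Y)\) of \(V(G)\). I would choose it to maximise a potential that rewards, at each vertex \(v\), the number of \(u\in S_v\) on the opposite side, and then take \(H\) to be the crossing edges.
\item Independence of \(S_v\) must be used, since without it the count only gives a quarter. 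The point is that \(v\) together with \(S_v\) spans a star in \(G\), so moving \(v\) never destroys edges inside \(S_v\).
\item This should let me argue that, summed over \(v\), each arc \((v,u)\) whose reverse \((u,v)\) also lies in \(A\) is double-counted only when it crosses, which would recover the factor \(\tfrac12\).
\end{itemize}
If the cut argument falls short, the fallback is a direct greedy or probabilistic choice. One could process vertices in order, keep the star \(\{vu:u\in S_v\}\) and delete conflicting edges, then charge each deleted edge to a triangle through an independent pair. Checking that this charging loses at most half of \(\sum_v I(v)\) is the step I expect to need the most care.
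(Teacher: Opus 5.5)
Your reduction is the same as the paper's. The extension step, the double star on an edge of a triangle-free $H$, and the bound $\max_{uv\in E(H)}(d_H(u)+d_H(v))\ge 4|E(H)|/n$ all match. The gap is the step everything rests on: a triangle-free spanning $H$ with $|E(H)|\ge\frac12\sum_v I(v)$. You do not prove it, and your main route to it cannot work.

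Taking $H$ to be the crossing edges of a bipartition makes $H$ bipartite, and bipartite subgraphs cannot meet this bound in general. For $G=C_5$, every neighbourhood is an independent pair, so $\frac12\sum_v I(v)=5=|E(C_5)|$, whereas every bipartite subgraph of $C_5$ has at most $4$ edges. Now blow up each vertex of $C_5$ into an independent set of size $t$. This gives $\frac12\sum_v I(v)=5t^2$ against a maximum cut of $4t^2$. The chain $L_s(G)\ge 4|E(H)|/n-2$ then certifies at most $16t/5-2$ leaves, short of the required $4t-2$ once $t\ge 2$. No choice of potential or local-optimality argument can repair this, because the loss comes from bipartiteness itself. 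Your greedy/charging fallback is not specified precisely enough to check.

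The missing idea is to keep only \emph{forward} selections with respect to a linear order $\prec$: retain $xy$ with $x\prec y$ exactly when $y\in S_x$.
\begin{itemize}
\item The resulting $H_\prec$ is triangle-free. The earliest vertex $x$ of a triangle $xyz$ would have $y,z\in S_x$ although $yz\in E(G)$, contradicting the independence of $S_x$. It need not be bipartite; for $C_5$ it is all of $C_5$.
\item Each edge carries at most one forward arc. So $|E(H_\prec)|$ equals the number of pairs $(x,u)$ with $u\in S_x$ and $x\prec u$. This removes the double counting that left you with only a quarter.
\item Every such pair is forward in exactly one of $\prec$ and its reverse. Hence $|E(H_\prec)|+|E(H_{\prec^{\mathrm{rev}}})|=\sum_v I(v)$, and one of the two subgraphs keeps at least half the mass. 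Equivalently, a uniformly random order does so in expectation.
\end{itemize}
With this lemma in hand, the rest of your argument goes through unchanged.
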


The proof has two ingredients. First, the chosen independent
neighborhoods yield a relatively dense triangle-free spanning
subgraph. Second, an edge with large endpoint-degree sum in that
subgraph supports a double star with many leaves.

\section{Auxiliary lemmas}

\begin{lemma}[Tree extension]\label{lem:extension}
Let \(G\) be connected, and let \(F\subseteq G\) be a tree with at
least two vertices. Then \(F\) is contained in a spanning tree \(T\)
of \(G\) satisfying
\[
 \ell(T)\ge \ell(F).
\]
\end{lemma}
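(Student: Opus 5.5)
The idea is to grow \(F\) into a spanning tree one vertex at a time, attaching each new vertex as a pendant vertex, and to check that a single such step never decreases the number of leaves. Since \(G\) is connected and \(F\) is a subgraph of \(G\), while \(V(F)\ne V(G)\) there is an edge \(xy\in E(G)\) with \(x\in V(F')\) and \(y\notin V(F')\), where \(F'\) is the current tree. Adding the vertex \(y\) and the edge \(xy\) gives a larger tree \(F''\supseteq F'\) that is still a subgraph of \(G\). After \(|V(G)|-|V(F)|\) steps we reach a spanning tree \(T\) of \(G\) containing \(F\), so the only thing to verify is the leaf count at each step.

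For one step, the degrees change only at \(x\) and \(y\): \(d_{F''}(y)=1\) and \(d_{F''}(x)=d_{F'}(x)+1\), while every other vertex keeps its degree. The new vertex \(y\) is a leaf of \(F''\), so it contributes \(+1\). The vertex \(x\) can change status only if it was a leaf of \(F'\). In that case it stops being a leaf, since its degree becomes \(2\), and contributes \(-1\). It cannot become a new leaf, because its degree was at least \(1\) and has increased. Hence
\[
 \ell(F'')\ge \ell(F')+1-1=\ell(F').
\]
Inducting over the steps then gives \(\ell(T)\ge\ell(F)\).

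The one point that needs care is the hypothesis that \(F\) has at least two vertices. Under the paper's convention, a one-vertex tree has no leaves, and its single vertex \(x\) has degree \(0\). Attaching \(y\) to it would make \(x\) go from degree \(0\) to degree \(1\), so \(x\) would \emph{become} a leaf. That case causes no harm, but the clean count above uses \(d_{F'}(x)\ge1\). Because \(F\) has at least two vertices and each step keeps the tree connected, every vertex of every intermediate tree has degree at least \(1\). So the case analysis at \(x\) is exhaustive: either \(x\) was a leaf and loses that status, or it had degree at least \(2\) and nothing changes.

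I do not expect a real obstacle here. The only delicate point is the degree-\(0\) edge case just described, and it is handled by the hypothesis.
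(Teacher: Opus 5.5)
Your proof is correct and follows the same approach as the paper: you grow \(F\) by attaching pendant vertices along edges leaving the current tree, and each step either replaces a leaf or adds one. Your explicit remark that the hypothesis of at least two vertices keeps \(d_{F'}(x)\ge 1\) at every step spells out a point the paper leaves implicit.
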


\begin{proof}
Start with \(F\). If the current tree has vertex set
\(U\subsetneq V(G)\), connectivity gives an edge \(ab\in E(G)\) with
\(a\in U\) and \(b\notin U\). Add \(b\) and the edge \(ab\). The
result remains a tree. If \(a\) was a leaf, then \(b\) replaces \(a\)
as a leaf; otherwise \(b\) is an additional leaf. Thus the number of
leaves never decreases. Repeating this step produces the required
spanning tree.
\end{proof}

\begin{lemma}[Triangle-free extraction]\label{lem:extraction}
Let \(G\) be a finite simple graph. For every \(v\in V(G)\), let
\(A_v\subseteq N_G(v)\) be independent. Then \(G\) has a
triangle-free spanning subgraph \(H\) such that
\[
 |E(H)|\ge \frac12\sum_{v\in V(G)}|A_v|.
\]
\end{lemma}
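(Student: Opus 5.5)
The plan is to build \(H\) from a linear order of \(V(G)\) in which every edge is ``owned'' by its earlier endpoint, so that a triangle would force two neighbours of one vertex \(v\) inside the independent set \(A_v\) to be adjacent. An averaging over the order and its reverse then supplies the factor \(\tfrac12\).

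\emph{Step 1: define \(H\).} Fix a linear order \(<\) on \(V(G)\). Let \(H=H_<\) be the spanning subgraph whose edges are the pairs \(va\) with \(a\in A_v\) and \(v<a\). Each such pair is an edge of \(G\) because \(A_v\subseteq N_G(v)\), so \(H\) is a spanning subgraph of \(G\).

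\emph{Step 2: \(H\) is triangle-free.} Suppose \(x<y<z\) span a triangle in \(H\).
\begin{itemize}
\item The edge \(xy\) can only have been added via the ordered pair \((x,y)\), since \(x<y\). Hence \(y\in A_x\).
\item For the same reason, the edge \(xz\) forces \(z\in A_x\).
\item But \(yz\in E(H)\subseteq E(G)\), so \(A_x\) contains two adjacent vertices. This contradicts the independence of \(A_x\).
\end{itemize}
So \(H\) is triangle-free. This is the crux of the argument, and it is immediate once the ownership rule is fixed.

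\emph{Step 3: count the edges.} Consider the set of ordered pairs
\[
P=\{(v,a): v\in V(G),\ a\in A_v\}, \qquad |P|=\sum_v |A_v|.
\]
An edge \(xy\) of \(H\) with \(x<y\) arises only from the pair \((x,y)\). Therefore
\[
|E(H_<)|=\bigl|\{(v,a)\in P: v<a\}\bigr|,
\]
with no double counting. Now let \(<'\) be the reverse order. Every pair \((v,a)\in P\) has \(v\ne a\), so it is counted in exactly one of \(H_<\) and \(H_{<'}\). Hence
\[
|E(H_<)|+|E(H_{<'})|=\sum_v|A_v|,
\]
and one of these two triangle-free graphs has at least \(\tfrac12\sum_v|A_v|\) edges.

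The only point that needs care is the bookkeeping in Step 3. Both \((v,a)\) and \((a,v)\) may lie in \(P\), but the ownership rule counts each edge of \(H\) exactly once, from the earlier endpoint. This is why the count is an exact equality rather than only an inequality.
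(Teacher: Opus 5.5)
Your proof is correct and follows the paper's argument: the same ownership rule (keep an edge exactly when its earlier endpoint selects the later one), the same contradiction at the earliest vertex of a triangle, and the same exact count $|E(H_\prec)|+|E(H_{\prec^{\mathrm{rev}}})|=\sum_v|A_v|$ using the reversed order. Your bijection with the ordered pairs $(v,a)$ satisfying $v<a$ is a rephrasing of the paper's edgewise indicator identity.
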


\begin{proof}
Fix a linear order \(\prec\) on \(V(G)\). Define a spanning subgraph
\(H_\prec\) by retaining an edge \(xy\in E(G)\) precisely when its
earlier endpoint selects its later endpoint. Explicitly,
\[
 xy\in E(H_\prec)
 \quad\Longleftrightarrow\quad
 \begin{cases}
 y\in A_x,&x\prec y,\\
 x\in A_y,&y\prec x.
 \end{cases}
\]

The graph \(H_\prec\) is triangle-free. Indeed, suppose that
\(x,y,z\) form a triangle in \(H_\prec\), and let \(x\) be the
earliest of the three vertices. The edges \(xy\) and \(xz\) then
force \(y,z\in A_x\). But \(yz\in E(H_\prec)\subseteq E(G)\), which
contradicts the independence of \(A_x\).

Let \(\prec^{\mathrm{rev}}\) denote the reverse order. For an edge
\(xy\in E(G)\) with \(x\prec y\), the exact indicator identity
\[
 \mathbf 1_{\{xy\in E(H_\prec)\}}
 +\mathbf 1_{\{xy\in E(H_{\prec^{\mathrm{rev}}})\}}
 =
 \mathbf 1_{\{y\in A_x\}}+\mathbf 1_{\{x\in A_y\}}
\]
holds. Summing over all edges gives
\[
 |E(H_\prec)|+|E(H_{\prec^{\mathrm{rev}}})|
 =\sum_{v\in V(G)}|A_v|.
\]
Both spanning subgraphs are triangle-free, so at least one of them
has the required number of edges.
\end{proof}

\begin{lemma}[Double-star bound]\label{lem:double-star}
Let \(G\) be a connected graph on \(n\) vertices, and let
\(H\subseteq G\) be a triangle-free spanning subgraph with
\(m=|E(H)|>0\). Then
\[
 L_s(G)\ge \frac{4m}{n}-2.
\]
\end{lemma}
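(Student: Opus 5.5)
We build a double star in \(H\) around one well-chosen edge, count its leaves using triangle-freeness, and then extend it to a spanning tree of \(G\) by Lemma~\ref{lem:extension}.

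\emph{Choosing the edge.} Write \(d(v)=d_H(v)\). Summing \(d(x)+d(y)\) over the edges \(xy\in E(H)\) counts each vertex \(v\) exactly \(d(v)\) times, so
\[
 \sum_{xy\in E(H)}\bigl(d(x)+d(y)\bigr)=\sum_{v\in V(G)}d(v)^2\ge \frac{1}{n}\Bigl(\sum_{v}d(v)\Bigr)^2=\frac{4m^2}{n},
\]
where the inequality is Cauchy--Schwarz. Since \(m>0\), averaging over the \(m\) edges gives an edge \(xy\in E(H)\) with
\[
 d(x)+d(y)\ge \frac{4m}{n}.
\]

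\emph{Building the double star.} Because \(H\) is triangle-free, \(N_H(x)\setminus\{y\}\) and \(N_H(y)\setminus\{x\}\) are disjoint. Take \(F\) to be the tree with central edge \(xy\), with \(x\) joined to every vertex of \(N_H(x)\setminus\{y\}\) and \(y\) joined to every vertex of \(N_H(y)\setminus\{x\}\). This is a tree in \(H\subseteq G\) with at least two vertices. Its vertices of degree one are exactly these \(d(x)+d(y)-2\) outer neighbours, with one exception. If, say, \(d(x)=1\), then \(x\) itself is a leaf of \(F\). In every case \(\ell(F)\ge d(x)+d(y)-2\). Lemma~\ref{lem:extension} then supplies a spanning tree \(T\) of \(G\) with
\[
 \ell(T)\ge\ell(F)\ge \frac{4m}{n}-2.
\]

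\emph{Main obstacle.} The delicate point is the degenerate leaf count. When \(d(x)=d(y)=1\), the tree \(F\) is the single edge \(xy\) with two leaves, which is still at least \(0=d(x)+d(y)-2\). When exactly one endpoint has degree one, the count only improves. So the bound holds in all cases. The disjointness of the two outer neighbourhoods is precisely where triangle-freeness enters: without it, a common neighbour of \(x\) and \(y\) would be counted twice and the double star would not be a tree.
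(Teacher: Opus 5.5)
Your proposal is correct and follows the paper's proof essentially step for step. You use the same double-counting and Cauchy--Schwarz choice of an edge with \(d(x)+d(y)\ge 4m/n\), the same double star whose outer neighbourhoods are disjoint by triangle-freeness, the same treatment of a degree-one centre as an extra leaf, and the same extension via Lemma~\ref{lem:extension}.
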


\begin{proof}
Write \(d(v)=d_H(v)\). Double counting followed by the
Cauchy--Schwarz inequality gives
\[
 \sum_{xy\in E(H)}\bigl(d(x)+d(y)\bigr)
 =\sum_{v\in V(G)}d(v)^{2}
 \ge \frac{\left(\sum_v d(v)\right)^{2}}{n}
 =\frac{4m^2}{n}.
\]
Hence some edge \(xy\in E(H)\) satisfies
\[
 d(x)+d(y)\ge \frac{4m}{n}.
\]

Set
\[
 X=N_H(x)\setminus\{y\},
 \qquad
 Y=N_H(y)\setminus\{x\}.
\]
Since \(H\) is triangle-free, \(X\cap Y=\varnothing\). Let \(Q\) be
the subgraph with
\[
 V(Q)=\{x,y\}\cup X\cup Y
\]
and
\[
 E(Q)=\{xy\}\cup\{xu:u\in X\}\cup\{yv:v\in Y\}.
\]
This chosen subgraph is connected and has
\[
 |E(Q)|=1+|X|+|Y|=|V(Q)|-1,
\]
so \(Q\) is a tree. Every vertex of \(X\cup Y\) is a leaf of \(Q\).
Consequently,
\[
 \ell(Q)\ge |X|+|Y|=d(x)+d(y)-2
 \ge \frac{4m}{n}-2.
\]
If one of \(d(x),d(y)\) equals one, the corresponding center is an
additional leaf, so the displayed lower bound remains valid.
Lemma~\ref{lem:extension} now extends \(Q\) to a spanning tree of
\(G\) without decreasing its number of leaves.
\end{proof}

\section{Proof of the main theorem}

\begin{proof}[Proof of Theorem~\ref{thm:main}]
First suppose that \(n=|V(G)|\ge 2\), and put
\[
 S=\sum_{v\in V(G)}I(v).
\]
For each \(v\), choose an independent set
\[
 A_v\subseteq N_G(v),
 \qquad
 |A_v|=I(v).
\]
Lemma~\ref{lem:extraction} gives a triangle-free spanning subgraph
\(H\subseteq G\) with
\[
 m=|E(H)|\ge \frac S2.
\]
Because \(G\) is connected and \(n\ge2\), every vertex has a
neighbor, and hence \(I(v)\ge1\). Thus \(S\ge n\), so \(m>0\).
Lemma~\ref{lem:double-star} therefore applies and yields
\[
 \begin{aligned}
 L_s(G)
 &\ge \frac{4m}{n}-2\\
 &\ge \frac{2S}{n}-2\\
 &=2\left(
 \frac1n\sum_{v\in V(G)}I(v)-1
 \right).
 \end{aligned}
\]

If \(n=1\), then the unique vertex \(v\) has
\(N_G(v)=\varnothing\), so \(I(v)=0\). The unique spanning tree has
no degree-one vertices, and hence \(L_s(G)=0\). The asserted
inequality is then \(0\ge-2\).
\end{proof}

\section{Sharpness}

\begin{proposition}\label{prop:sharp}
For every integer \(r\ge2\), equality in Theorem~\ref{thm:main}
holds for \(K_{r,r}\). In particular, the coefficient \(2\) is best
possible.
\end{proposition}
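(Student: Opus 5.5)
We need to compute both sides for \(G=K_{r,r}\) with \(r\ge2\) and show that they coincide. The plan is to compute \(I_{\mathrm{avg}}\) directly, then get the matching upper bound on \(L_s(K_{r,r})\) by a degree count in an arbitrary spanning tree. The lower bound on \(L_s\) comes for free from Theorem~\ref{thm:main}.

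\emph{The right-hand side.} Every vertex \(v\) of \(K_{r,r}\) has \(N(v)\) equal to the opposite part. That part is an independent set of size \(r\), so \(I(v)=r\) for all \(v\). Hence \(I_{\mathrm{avg}}=r\), and the bound of Theorem~\ref{thm:main} reads \(L_s(K_{r,r})\ge 2r-2\). It remains to prove \(L_s(K_{r,r})\le 2r-2\).

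\emph{The upper bound.} Let \(T\) be any spanning tree, with parts \(P\) and \(Q\), \(|P|=|Q|=r\).
\begin{itemize}
\item Every edge of \(T\) joins \(P\) to \(Q\), so \(\sum_{v\in P}d_T(v)=|E(T)|=2r-1\), and likewise for \(Q\).
\item If \(P\) contains \(\ell_P\) leaves, the remaining \(r-\ell_P\) vertices of \(P\) have degree at least \(2\), so \(2r-1\ge \ell_P+2(r-\ell_P)\), i.e.\ \(\ell_P\le 1\)? That is false in general: a star-like tree has many leaves in one part.
\end{itemize}
So redo the count more carefully. Write \(\ell_P\) and \(\ell_Q\) for the numbers of leaves in \(P\) and \(Q\).
\begin{itemize}
\item The non-leaves of \(P\) number \(r-\ell_P\). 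Since \(r\ge 2\), the tree has more than two vertices, so these non-leaves are exactly the vertices of \(P\) with degree at least \(2\).
\item The leaves in \(Q\) attach to vertices of \(P\), and no vertex of \(P\) of degree \(1\) can be adjacent to a leaf of \(Q\): that edge would be a whole component of \(T\), impossible since \(2r>2\). So the non-leaves of \(P\) absorb all \(\ell_Q\) edges coming from leaves of \(Q\).
\item Each non-leaf of \(P\) also needs a further edge toward a non-leaf of \(Q\), otherwise \(T\) would be disconnected (when both parts contain non-leaves). Counting edges at \(P\) then gives \(2r-1\ge \ell_P+\ell_Q+(r-\ell_P)\), i.e.\ \(\ell_Q\le r-1\).
\item Symmetrically, \(\ell_P\le r-1\).
\end{itemize}
Adding the two bounds gives \(\ell(T)\le 2r-2\).

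\emph{The main obstacle} is making the claim ``each non-leaf of \(P\) has an edge to a non-leaf of \(Q\)'' rigorous, and handling the degenerate case where one part has no non-leaves. A cleaner route may be the simple observation that \(\ell_Q\le r-1\) directly: if all \(r\) vertices of \(Q\) were leaves, then \(T-Q\) would be \(P\) with no edges. Since \(T-Q\) must remain connected (removing leaves from a tree keeps it a tree), this forces \(|P|=1\), contradicting \(r\ge2\). By the same argument \(\ell_P\le r-1\), so \(\ell(T)\le 2r-2\), giving equality. I would use this cleaner argument.

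Finally, on sharpness of the coefficient: the family \(K_{r,r}\) has \(I_{\mathrm{avg}}=r\to\infty\) with equality \(L_s=2(I_{\mathrm{avg}}-1)\). A bound of the form \(c\,(I_{\mathrm{avg}}-1)\) with \(c>2\), or \(cI_{\mathrm{avg}}-d\) with \(c>2\), therefore fails for large \(r\).
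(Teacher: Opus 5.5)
Your argument is correct and follows the paper's structure: \(I_{\mathrm{avg}}(K_{r,r})=r\), and in any spanning tree each side of the bipartition contains a non-leaf, so \(\ell(T)\le 2r-2\). The differences are minor and both routes are legitimate: the paper gets \(L_s(K_{r,r})\ge 2r-2\) from an explicit spanning double star rather than by invoking Theorem~\ref{thm:main}, and it finds the non-leaf through the count \(\sum_{x\in L}d_T(x)=2r-1>r\) rather than your leaf-deletion connectivity argument; in a write-up, drop the abandoned first count.
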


\begin{proof}
Let \(L\) and \(R\) be the two parts of \(K_{r,r}\). The neighborhood
of every vertex is the opposite part, which is an independent set of
size \(r\). Therefore
\[
 I_{\mathrm{avg}}(K_{r,r})=r.
\]

Choose \(a\in L\) and \(b\in R\). The edges
\[
 ab,\qquad
 ay\quad(y\in R\setminus\{b\}),\qquad
 bx\quad(x\in L\setminus\{a\})
\]
form a spanning double star with \(2r-2\) leaves. Hence
\[
 L_s(K_{r,r})\ge2r-2.
\]

Conversely, let \(T\) be any spanning tree of \(K_{r,r}\). Since
each edge of \(T\) has exactly one endpoint in \(L\),
\[
 \sum_{x\in L}d_T(x)=|E(T)|=2r-1>r.
\]
Thus some vertex of \(L\) is not a leaf. The same argument for \(R\)
shows that \(R\) also contains a nonleaf. Hence \(T\) has at most
\(2r-2\) leaves. It follows that
\[
 L_s(K_{r,r})=2r-2
 =2\bigl(I_{\mathrm{avg}}(K_{r,r})-1\bigr).
\]
\end{proof}

\end{document}